\def\emph#1{{\em #1}}
\newcommand{\pa}{\partial}
\newcommand{\de }{\Delta}
\numberwithin{equation}{section}
\newtheorem{theorem}{Theorem}[section]
\newtheorem{corollary}[theorem]{Corollary}
\newtheorem{lemma}[theorem]{Lemma}
\theoremstyle{definition}
{\newtheorem{remark}[theorem]{Remark}
\newtheorem{example}[theorem]{Example}

\newtheorem{defn}[theorem]{Definition}
\begin{document}
\title[Sprays and Dirac Structures]{Sprays and Dirac Structures}
\date{\today}
\keywords{second order differential equations; Dirac structures}

\author[Azizpour]{Esmaeil Azizpour}
\address{Department of Pure Mathematics, {Faculty of Mathematical  Sciences}\\
 University of Guilan, P.o. Box 1914,\\
Namjoo Street, Rasht, Iran}
\email{eazizpour@guilan.ac.ir}

\author[Moazzami]{Ghazaleh Moazzami}
\address{Department of Pure Mathematics, {Faculty of Mathematical  Sciences}\\
 University of Guilan, P.o. Box 1914,\\
Namjoo Street, Rasht, Iran}
\email{ghazale.moazzami@gmail.com}
\begin{abstract}
 We look at the the possibility  of a spray having a Hamiltonian description considering  Dirac structure as underlying geometric structure.  In a simpler scenario, we consider almost Dirac structure as an auxiliary object to find constants of motion for sprays. Staring with the horizontal distribution associated to a spray, we use gauge transformations generated by two forms in order to obtain constants of motion and possible Hamiltonian description. We also apply our approach to semi-spray with some modifications. 
 \end{abstract}

\maketitle

\section{Introduction}
  The original underlying geometry for  Hamiltonian systems is the symplectic geometry. {\it A symplectic structure} on the smooth manifold $M$ is given by a closed, non-degenerate two form $\omega$. The word non-degenerate means that the bundle maps 
\begin{equation}
\label{omega-sharp}
\omega^\sharp:TM\to T^\ast M,\quad X\to i_X\omega,
\end{equation}
 where $i_X\omega(Y)=\omega(X,Y)$ for every $X,Y\in TM$, is an isomorphism. A Hamiltonian vector field is defined using this isomorphism, i.e., given a Hamiltonian ${H:M\to\mathbb{R}}$ one assigns to it the Hamiltonian vector field defined by $X_H=(\omega^\sharp)^{-1}(d H)$. Even though, there are plenty of natural dynamical systems which are Hamiltonian in the symplectic setting, the non-degeneracy condition is not satisfied in many other situations. 
 One example, related to our work here, is the Euler-Lagrange equations arising from a singular Lagrangian.

 The first option to generalize the notion of a symplectic structure is to simply drop the non-degeneracy condition on the closed two form $\omega$, i.e., consider what is called {\it presymplectic structures}. In spite  of the fact that $\omega^\sharp$ turns degenerate, one still can define Hamiltonian vector field via equality $\omega^\sharp(X_H)=d H$. In this case the Hamiltonian vector field associated to a Hamiltonian will be unique up to addition of the element of the kernel of $\omega$. 

The second option to generalize the notion of a symplectic structure is to consider the inverse map $\pi_\omega^\sharp:=(\omega^\sharp)^{-1}$ and relax its  non-degeneracy property. Denoting the space of smooth function on $M$ by $C^\infty(M)$, {\it a poisson structure} is defined either by a skew-symmetric bracket $\{.,.\}:C^\infty(M)\times C^\infty(M)\to C^\infty(M)$ satisfying Leibniz's rule and Jacobi identity or by a bi-vector $\pi:T^\ast M\times T^\ast M\to\mathbb{R}$ with the property $[\pi,\pi]=0$, where $[.,.]$ is the well-known Schouten bracket. Similarly, one defines $\pi^\sharp:T^\ast M\to T^\ast M$. The inverse of the bundle map $\pi_\omega^\sharp$ defines a poisson structure since the closeness of $\omega$ yields the equality $[\pi_\omega,\pi_\omega]=0$. The poisson structure $\pi_\omega$ is non-degenerate but a general poisson structure might not be.  The poisson setting is more suitable for Hamiltonian description of dynamical systems in the sense that given any Hamiltonian $H:M\to\mathbb{R}$ its Hamiltonian vector field can be obtained directly by equation $X_H=\pi^\sharp(d H)$. The equation $[\pi,\pi]=0$ amounts to the fact that the characteristic distribution $C_\pi:={\rm Im}(\pi^\sharp)$ is integrable. Each leaf of the foliation integrating $C_\pi$ inherits a symplectic structure from the poisson structure $\pi$. Clearly, every Hamiltonian vector field $X_H=\pi^\sharp(d H)$ is tangent to this foliation. Restriction of a Hamiltonian vector field to a given leaf sets us back in the symplectic setting. 

 Dirac structures were introduced in \cite{MR998124,MR951168} as a way to unite and generalize both  presymplectic and poisson structures. This common framework relies on viewing presymplectic and poisson structures as subbundles of the vector bundle 
$$\mathbb{T}M:=TM\oplus T^\ast M,$$
defined by the graphs of $\omega^\sharp$ and $\pi^\sharp$.
A Dirac structure is a linear subbundle, $L$, of $\mathbb{T}M$ which is maximal isotropic with respect to the natural pairing 
\begin{equation}
\label{pairing}
\ll (X,\alpha),(Y,\beta)\gg=\frac{1}{2}\left(\beta(X)+\alpha(Y)\right),
\end{equation} 
 on $\mathbb{T}M$ and satisfies an integrability condition, see Section~\ref{sec:dirac} for more detail. The    graphs of $\omega^\sharp$ and $\pi^\sharp$ are maximal isotropic and closeness of the symplectic form and vanishing of the  Schouten bracket of the poisson structure with itself yield  integrability condition.

 A vector field $X$ is Hamiltonian with respect to Dirac structure $L$ if and only if there exist a function $H$ such that $(X,d H)\in L$.

A {\it semi-spray}, also referred to as second order differential equation (SODE), on $\mathbb{R}^n$ is a vector field $S$ defined on $T\mathbb{R}^n$ by 
\[S=\sum_{\alpha=1}^n y_\alpha\frac{\partial}{\partial x_\alpha}- 2\sum_{a=1}^n G^a(x,y)\frac{\partial}{\partial y_a},\]
where $(x_1,\ldots,x_n)$ is coordinates on $\mathbb{R}^n$ with its induced chart $(x_1,\ldots,x_n,$ $ y_1,\ldots,y_n)$ 
on $T\mathbb{R}^n$ and  $G^a, a=1,..,n$ are smooth functions. It is a well known fact that to every semi-spray one can associate a nonlinear connection. Via this association one, consequently, assigns a horizontal distribution to a given semi-spray, see Subsection~\ref{sode-connections} for more details. 

Our principal aim here is to find possible constants of motion and Hamiltonian descriptions for a given semi-spray. There are two motivations to consider Dirac structures, rather than symplectic or poisson structures, treating this problem. First one is the simple fact that Dirac  structures are more general and there is more possibility for a given vector field to become Hamiltonian with respect to a Dirac structure, see Remark~\ref{example-only-Dirac}. The second motivation is that symplectic and poisson manifolds have no local invariants including curvature. But, on the contrary, the concept of curvature can be defined for a Dirac manifold. In fact any Dirac manifold becomes a Lie algebroid considering the projection map to the tangent bundle as anchor map and the concept of connection and curvature is defined for Lie algebroids, see~\cite{MR1929305}. Here, we deal with semi-sprays and connections associated to them, so it makes more sense to consider objects that the concept of curvature is defined for them. 

The subject of our work closely parallels a situation in Lagrangian mechanics which is referred to as Helmholtz problem. Given equations of motion for a system, one seeks Lagrangian functions in which their Lagrangian equations are equivalent to the given equations of motion. A Lagrangian function is called a standard one if it is sum of kinetic energy and a potential function and is referred to as non-standard Lagrangian otherwise. Helmholtz problem has been studied by many researcher considering various methods. In regard to our work, in \cite{CR}, the authors study the second-order Riccati equation introducing the concept of non-standard Lagrangians for them. These systems have a preserved energy function i.e a constant of motion. It is shown there that the  values of the preserved energy function can be used as an appropriate parameter for characterizing the behavior of the solutions of the system. This can be considered as a motivation for seeking constants of motion for semi-prays even without considering the Hamiltonian description. 

Another relevant subject is Noether's theorem which connects 
symmetries of a system to constants of motions. It should be noted that in  various articles, it was shown that the constants of motion of a system does not necessarily result from the  Noether's theorem, see \cite{ho}, \cite{cr}. Our approach to constants of motion does not takes the symmetries into account. 

Given a semi-spray $S$, we take the horizontal distribution associated to it as starting point. This distribution yields an almost Dirac structure $L=D\oplus D^\circ$, see Subsection~\ref{sec:dirac} for more details. Given a two from $\omega$, via what is known as {\it gauge transformation}, one may obtain an other almost Dirac structure i.e.
\[L_{\omega}=\{(X,\alpha)\,|\, X\in D,\,\, (\alpha-i_X\omega)\in D^\circ\}.\]

If there exist a closed one-form $\alpha$ such that $(S,\alpha)\in L_\omega$ then $H$ such $dH=\alpha$ is a constant of motion for $S$. If $D$ is integrable then $L$ is a Dirac structure and further condition of $\omega$ being closed yields that $L_\omega$ is a Dirac structure as well. In this case $S$ has a Hamiltonian distribution with respect to $L_\omega$ having $H$ as its Hamiltonian. In our opinion, it is easier to obtain a constant motion of the system in this way than in conventional methods. We apply our method in some examples. 

{\bf Organization of the paper:} In Section~\ref{preliminaries}, we provide a brief introduction to the needed concepts. In Section~\ref{semi-sprays-and-dirac}, we present our results. In Section~\ref{examples}, we provide some examples.

\section{Preliminaries}
\label{preliminaries}

In this section, we present a brief introduction to Dirac structures, semi-sprays and non-linear connections. We will also present relations between these three objects.  

\subsection{Dirac structure} \label{sec:dirac} The vector bundle $\mathbb{T}M$ is called {\it big tangent bundle} or, in some literature, {\it Pontryagian bundle}. Denoting the natural pairing between the vector field $X\in\mathfrak{X}(M)$ and the one-form $\beta\in \Omega^1(M)$ by $\beta(X)$, a natural non-degenerate, symmetric and fiber-wise linear form is defined on $\mathbb{T}(M)$ by
\begin{equation}
\label{pairing}
\ll (X,\alpha),(Y,\beta)\gg=\beta(X)+\alpha(Y) \quad 
X,Y\in\mathfrak{X}(M)\quad  \alpha, \beta\in \Omega^1(M).
\end{equation} 
Let $L$ be a linear subbundle of $\mathbb{T}(M)$, its annihilator  with respect to the pairing $\ll.,.\gg$ is defined as 
\[L^\perp:=\{(X,\alpha)\in \mathbb{T}(M)\,\,| \,\,\ll(X,\alpha),(Y,\beta)\gg=0\quad \forall (Y,\beta)\in L\},\]
where by belonging to $L$, we mean being a section of $L$. We will use the same notion for the sections of $\mathbb{T}M$. 

The pairing\, $\ll.,.\gg$ \, is neither positive definite nor negative definite. As a consequence, for a given linear subbundle $L$ of $\mathbb{T}M$, the intersection $L\cap L^\perp$ can be non-empty. 
Having this in mind, a linear subbundle $L\subset V\oplus V^\ast$ is called \emph{isotropic} if $L\subseteq L^\perp$.
 
A linear subbundle $L\subset\mathbb{T}M$ is called an {\it almost Dirac structure} on the manifold $M$, or sometimes referred to as a {\it Lagrangian subbundle of $\mathbb{T}M$}, if and only if $L=L^\perp$.
\begin{defn}\label{almost-dirac}
A linear subbundle $L\subset\mathbb{T}M$ is called an {\it almost Dirac structure} on the manifold $M$, or sometimes referred to as a {\it Lagrangian subbundle of $\mathbb{T}M$}, if and only if $L=L^\perp$.
\end{defn}

The condition $L=L^\perp$ is also called {\it maximally isotropic}  since it yields the fact that ${\rm dim}(L)=n$ which is the maximum dimension of an isotropic subbundle. On the other hand, if ${\rm dim}(L)=n$ and $L\subseteq L^\perp$, then $L=L^\perp$, see~\cite{MR3098084}.

\begin{example}
\label{example-distribution}
Let $D$ be a  distribution on the manifold $M$ and $D^\circ$ its annihilator, then the subbundle $L_D:=D\oplus D^\circ$ clearly defines an almost Dirac structure. 
\end{example}
There is another structure on $\mathbb{T}(M)$ which is used to formalize an integrability condition i.e. the {\it Courant bracket}: 
\begin{equation}
\label{courant-bracket}
\llbracket(X,\alpha),(Y,\beta)\rrbracket=([X,Y],\mathcal{L}_X\beta-\mathcal{L}_Y\alpha+\frac{1}{2}d\left(\alpha(Y)-\beta(X)\right),
\end{equation}
where $(X,\alpha),(Y,\beta)\in\mathbb{T}M$. 

\begin{defn}
\label{dirac-structure}
An almost Dirac structure $L$, see Definition \ref{almost-dirac}, is called {\it  Dirac structure } if and only if  it is involutive with respect to Courant bracket $\llbracket.,.\rrbracket$ i.e.
\begin{equation}
\label{evolutive}
\llbracket(X,\alpha),(Y,\beta)\rrbracket\in L\quad\forall (X,\alpha),(Y,\beta)\in L.
\end{equation}
\end{defn}

Many examples of Dirac structures can be found in the literature, for example see \cite{MR998124,MR3098084,MR951168}.
The fact that Dirac structure unifies presymplectic and Poisson structure is verified by first and second items of the following example.
\begin{example}
\label{example-dirac}
let $M$ be an smooth manifold then 
\begin{itemize}
\item[i)] For any given presymplectic form $\omega$ on $M$, the graph of $\omega^\sharp$, see \eqref{omega-sharp}, i.e. $$L_\omega=\{(X,\omega^\sharp(X)) | X\in \mathfrak{X}(M)\}$$ 
 defines a Dirac structure on $M$. 
 \item[ii)] For any given poisson structure $\pi$ on the  smooth manifold $M$, the subbundle $L_\pi=\{(\pi^\sharp(\alpha),\alpha) | \alpha\in \Omega^1(M)\}$ defines a Dirac structure on $M$.
  \end{itemize}
  \end{example}

  In Example \eqref{example-distribution}, if the distribution $D$ integrates to a foliation then $L=D\oplus D^\circ$ is a Dirac structure, see~\cite{MR3098084}.
As its mentioned in~\cite[Example 3.6]{MR3098084}, given a closed two form $\omega$ the operation 
\[(X,\alpha)\to (X,\alpha +i_X\omega)\]
send Dirac structures to Dirac structures. This operation is called {\it gauge transformation}. Applying this operation to $L=D\oplus D^\circ$ we get:
 \begin{example}\label{the-one-we-use}
Given an integrable distribution $D$ and a closed form $\omega$, the linear subbundle 
\[\{(X,\alpha)\,|\, X\in D,\,\, (\alpha-i_X\omega)\in D^\circ\},\]
defines a Dirac structure. Dropping the integrability condition on $D$ or the closeness condition on $\omega$, we get an almost Dirac structure. 
\end{example}

{\bf Presymplectic leaves  and null distribution:} 
The bracket \eqref{courant-bracket} does not satisfy Jacobi identity for the sections of $\mathbb{T}M$, instead
\begin{equation}
\label{courant-jacobi}
\llbracket\llbracket a_1,a_2\rrbracket,a_3\rrbracket+c.p.=\frac{1}{3}d(\ll\llbracket a_1,a_2\rrbracket,a_3\gg+c.p.),
\end{equation}
for every $a_1,a_2,a_3\in \mathbb{T}M$, where $c.p.$ stands for cyclic permutation, so in general $\llbracket.,.\rrbracket$ does not give a Lie algebra structure to $\mathbb{T}M$. Restricting the Courant bracket to a Dirac  structure $L$, one see that the fact $L\subset L^\perp$ together with \eqref{courant-jacobi} yields
\begin{equation}
\label{evolutive2}
\llbracket\llbracket a_1,a_2\rrbracket,a_3\rrbracket+c.p=0\quad\forall a_1,a_2,a_3\in L,
\end{equation}
in other words, the Courant bracket restricted to the section of a Dirac  structure $L$ gives it a Lie algebra structure. This Lie algebra structure together with projection $Pr_1:L\to TM$ considered as {\it anchor map} gives $L$  what is called a {\it Lie algebroid structure}. As a consequence distribution $C:=Pr_{TM}(L)$ integrates to a possibly singular foliation of $M$. This foliation is called characteristic foliation of $L$.

Any leaf $\mathcal{O}$ of the characteristic foliation $C=Pr_{TM}(L)$ naturally inherits a closed two form $\omega_{L,\mathcal{O}}\in\Omega^2(\mathcal{O})$, defined at each point $x\in\mathcal{O}$ by 
\begin{equation}
\label{presymplectic-leave}
\omega_{L,\mathcal{O}}(X,Y)=\alpha(Y), ~~\mbox{where}\,X,Y\in T_x\mathcal{O}=C_x  ~~    \mbox{and}\, (X,\alpha)\in L_x.
\end{equation}
Definition~\eqref{presymplectic-leave} is independent of the choice of $\alpha$ since $L\subset L^\perp$. Closeness of $\omega_{L,\mathcal{O}}$ follows from the integrability of $L$.
Distribution 
\[K:=L\cap (TM\oplus\{0\}),\]
agrees, at each point, with the kernel of the leaf-wise two from $\Omega_{L,\mathcal{O}}$ and it is referred to as the kernel of the Dirac structure $L$. 

{\bf Hamiltonian vector fields:}
A function $H\in C^\infty(M)$ is called {\it admissible} on the Dirac structure, $L$, if there is a vector field $X_H$ such that
\[(X_H,d H)\in   L,\]
in which case $X_H$ is called Hamiltonian relative to $H$. Just as for the presymplectic case the vector field $X_H$ is unique up to addition of the elements of the kernel distribution $K$. When $K$ is regular, a function $H$ is admissible if and only if its differential annihilates $K$. 

\begin{remark}\label{example-only-Dirac}
Dirac structures are more general setting to study Hamiltonian vector fields than poisson and symplectic structures. 
Consider the singular poisson structure on $\mathbb{R}^3$ given by 
\[ \{x,y\}=\frac{1}{z}\,\,\{x,z\}=0,\quad \{y,z\}=0.\]
This poisson structure gives us Hamiltonian vector field $X_x=-\frac{1}{z}\frac{\partial}{\partial y}$ which is singular at $z=0$. We may rewrite this poisson structure as a Dirac structure which is smooth at $z=0$, i.e the one generated by 
 \[(\frac{\partial}{\partial y},-z dx),\quad (\frac{\partial}{\partial x)},z dy),\quad (0,dz).\]
 This Dirac structure has $z=$constant as presymplectic leaves and $\Omega=z dx\wedge dy$ as presymplectic form on them.
 The singular poisson structure above  appears in the study of guiding center motion in the plane.
 \end{remark}

%
\subsection{Non-linear connections and semi-sprays}\label{sode-connections}
In this subsection, we present needed concepts on semi-sprays, for more details about these concepts, one may consult \cite{MR1281613,MR2381561}. 

We consider $\mathbb{R}^n$ with coordinates $(x_1,..,x_n)$ and induced ones $(x_1,..,x_n,$ $y_1,...,y_n)$ on $T\mathbb{R}^n=\mathbb{R}^n\times\mathbb{R}^n$.
In most of the equations, the subscripts $i,j,..$ and $a,b,..$ will be used for the coordinates of the base and, respectively, for the coordinates on the fibers of $T\mathbb{R}^n$. The kernel of the differential of the projection $\pi:T\mathbb{R}^n\to \mathbb{R}^n$ determines a regular, $n$ dimensional, integrable distribution on the manifold $TQ$ which is called the vertical distribution. We will denote it by $VT\mathbb{R}^n$. The vertical vector field $\mathbb{C}=\sum_{a=1}^n y_a(\frac{\partial}{\partial y_a})$ is called {\it Liouville vector field}.

 The bundle map $J:TT\mathbb{R}^n\to TT\mathbb{R}^n$  defined by 
\begin{equation*}
J(\frac{\partial}{\partial x_\alpha})=\frac{\partial}{\partial y_\alpha},\quad J(\frac{\partial}{\partial y_a})=0,\quad\forall \alpha,a=1,...,n
\end{equation*}
is called the {\it tangent structure}. Clearly: ${\rm Ker} J={\rm Im}J=VTQ$, ${\rm rank}J=n$ and $J^2=0$. 

\begin{defn}\label{defn:sode1} A vector filed $S\in\mathcal{X}(TQ)$ is called a {\it semi-spray} iff $JS=\mathbb{C}$. A semi-spray is represented as follows:
 \begin{equation}
 \label{spray}
 S=\sum_{\alpha=1}^n y_\alpha\frac{\partial}{\partial x_\alpha}- 2\sum_{a=1}^n G^a(x,y)\frac{\partial}{\partial y_a},
 \end{equation}
 where $G^a,\,\,a=1,..,n,$ are smooth functions. Integral curves of the semi spray $S$ are solutions of following second order differential equations (SODE): 
 \begin{equation}
 \label{SODE}
 \frac{d^2 x_\alpha}{d t^2}+2G^\alpha(x,y)\frac{d x_\alpha}{dt}=0\quad \forall \alpha=1,..,n
\end{equation} 

A semi-spray $S$ is called a (full) {\it spray} iff the coefficient functions $G^a(x,y)$ are $2$-homogeneous in the second coordinate, i.e.
\begin{equation}\label{spray-condition}
G^a(x,\lambda y)=\lambda^2G^a(x,y)\quad\forall\lambda>0.
\end{equation}
\end{defn}

Vector fields $\frac{\partial}{\partial y_a},\,\,a=1,\ldots,n,$ constitute a basis for the vertical distribution $VT\mathbb{R}^n$. A supplementary distribution to the vertical distribution is defined as follows.
\begin{defn}\label{horizontal-distribution-to-semi-spray}
The distribution $H_S\subset TT\mathbb{R}^n$ generated by vector fields
\begin{equation}\label{horizontal-base}
\frac{\delta}{\delta x_i}=\frac{\partial}{\partial x_i}-\sum_{a=1}^n \frac{\partial G^a}{\partial y_\alpha}\,\frac{\partial}{\partial y_a},\quad\,i=1,\ldots,n,
\end{equation}
is called the {\it Horizontal} distribution associated to the semi-spray
\[ S=\sum_{\alpha=1}^n y_\alpha\frac{\partial}{\partial x_\alpha}- 2\sum_{a=1}^n G^a(x,y)\frac{\partial}{\partial y_a}.\]
Clearly, 
\begin{equation}\label{decomposition}T_uT\mathbb{R}^n=H_uT\mathbb{R}^n\oplus V_uT\mathbb{R}^n,\quad \mbox{for every}\,\, u\in T\mathbb{R}^n.\end{equation}
The basis $\{(\frac{\delta}{\delta x_i})_u,(\frac{\partial}{\partial y_a})_u\}_{i,a=1,..n}$ is referred to as the basis adapted  to the decomposition~\eqref{decomposition} or Berwald basis. The corresponding dual basis of $\{(\frac{\delta}{\delta x_i})_u,(\frac{\partial}{\partial y_a})_u\}_{i,a=1,..n}$ is $\{((d x_i)_u,\delta y_a:=d y_a+\sum_{i=1}^n \frac{\partial G^a}{\partial y_\alpha} d x_i\}_{i,a=1,..n}$. 
\end{defn}

Definition~\ref{horizontal-distribution-to-semi-spray} is motivated by a well-known fact that there is a bilateral relation between semi-sprays and non-linear connections. A non-linear connection, $N$, is defined by a $n$ dimensional distribution which is supplementary to the vertical distribution. Any given non-linear connection $N$ have a basis of the form
 \[
 \label{base-horizontal-expression}
 (\frac{\delta}{\delta x_i})_u=(\frac{\partial}{\partial x_i})_u-\sum_{a=1}^n N^a_i(u)(\frac{\partial}{\partial y_a})_u,\quad\,u\in T\mathbb{R}^n, 
 \]  
and clearly such a basis defines a non-linear connection. 
The above mentioned bilateral relation is as follows (for more details see \cite[Section 2.4]{MR2381561}):
\begin{itemize}
\item For a given semi-spray $S$ defined by functions $G^a(x,y)$, the coefficients 
\begin{equation}
\label{associated-connection}
N^a_\alpha(x,y)=\frac{\partial G^a}{\partial y_\alpha}
\end{equation} define a nonlinear connection on $TQ$. 
\item If $N$ is a nonlinear connection on $TQ$  then the semi spray $S$ defined by coefficients $$G^a(x,y)=\frac{1}{2}\sum_{\alpha=1}^n y_\alpha N^a_\alpha(x,y),$$ is the unique semi-spray which satisfies $S=h[\mathbb{C},S]$, where $h$ denotes the projection on the (Horizontal) distribution defining $N$. 
\end{itemize}
Furthermore, let $S$ be a semi-spray and $N$ the nonlinear connection associated to it, then $S$ is a spray if and only if the spray associated to $N$ coincides with $S$. This happens when
\[G^a(x,y)=\frac{1}{2}\sum_{\alpha=1}^n y_\alpha \frac{\partial G^a}{\partial y_\alpha},\]
which equivalent to \eqref{spray-condition}.
In other words:
\begin{lemma}\label{spray-is-horizontal} 
A spray $S$ belongs to the horizontal distribution of the nonlinear connection associated to it. 
\end{lemma}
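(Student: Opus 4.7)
The plan is to exhibit $S$ explicitly as a linear combination of the basis vectors $\frac{\delta}{\delta x_i}$ of the horizontal distribution $H_S$. The natural candidate, given the form of $S$ and the fact that the coefficients of its $\partial/\partial x_\alpha$ terms are $y_\alpha$, is the combination $\sum_{\alpha} y_\alpha \frac{\delta}{\delta x_\alpha}$.

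First I would substitute the expression \eqref{horizontal-base} for $\delta/\delta x_\alpha$ into $\sum_\alpha y_\alpha \frac{\delta}{\delta x_\alpha}$, collect the vertical part, and obtain
\[
\sum_{\alpha=1}^n y_\alpha \frac{\delta}{\delta x_\alpha} \;=\; \sum_{\alpha=1}^n y_\alpha \frac{\partial}{\partial x_\alpha} \;-\; \sum_{a=1}^n \Bigl(\sum_{\alpha=1}^n y_\alpha \frac{\partial G^a}{\partial y_\alpha}\Bigr) \frac{\partial}{\partial y_a}.
\]
Comparing with \eqref{spray}, what remains is to show that the coefficient in parentheses equals $2G^a$.

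This is exactly where the spray hypothesis enters: because $S$ is a full spray, each $G^a$ is $2$-homogeneous in $y$ by \eqref{spray-condition}. Differentiating the relation $G^a(x,\lambda y)=\lambda^2 G^a(x,y)$ in $\lambda$ at $\lambda=1$ (Euler's identity for homogeneous functions) yields $\sum_\alpha y_\alpha \,\partial G^a/\partial y_\alpha = 2G^a$. Substituting this back gives $\sum_\alpha y_\alpha \frac{\delta}{\delta x_\alpha}=S$, so $S \in H_S$.

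There is no real obstacle here: the proof is essentially a one-line computation whose sole nontrivial ingredient is Euler's theorem on homogeneous functions, and the connection coefficients of the nonlinear connection associated to $S$ are precisely $N^a_\alpha = \partial G^a/\partial y_\alpha$ by \eqref{associated-connection}, which matches the coefficients appearing in the horizontal basis \eqref{horizontal-base}. The only thing to be careful about is to note that the argument fails for a general semi-spray, since without 2-homogeneity Euler's identity does not apply and $\sum_\alpha y_\alpha\, \partial G^a/\partial y_\alpha$ need not equal $2G^a$.
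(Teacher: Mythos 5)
Your proof is correct and rests on the same key fact as the paper's argument: Euler's identity for the $2$-homogeneous coefficients $G^a$, which gives $\sum_\alpha y_\alpha\,\partial G^a/\partial y_\alpha = 2G^a$ and hence $S=\sum_\alpha y_\alpha\,\frac{\delta}{\delta x_\alpha}$. The paper phrases this via the characterization $S=h[\mathbb{C},S]$ of the semi-spray associated to the connection, but the computation is the same, so your direct verification is a fine (and arguably more explicit) version of the paper's proof.
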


A nonlinear connection $N$ is called {\it integrable} if and only if the corresponding horizontal distribution is involutive.

\begin{defn}
Let 
\[R^a_{ij}:=\frac{\delta}{\delta x_j}(N^a_i)-\frac{\delta}{\delta x_i}(N^a_j)\quad i,j,a=1,...,n\]
then the tensor $R:=\frac{1}{2}\sum_{ij,a=1}^nR^a_{ij}dx_j\wedge dx_i\otimes\frac{\partial}{\partial y_a}$ is called {\it the curvature tensor} of the nonlinear connection $N$. 
\end{defn}
A simple calculation shows that (see \cite{MR2381561} page 32)
\[[\frac{\delta}{\delta x_i},\frac{\delta}{\delta x_j}]=\sum_{a=1}^n R^a_{ij}\frac{\partial}{\partial y_a}.\]
So, we have
\begin{lemma}
\label{horizontal-integrability}
 The horizontal distribution defining a nonlinear connection $N$ is integrable if and only if its curvature tensor vanishes. 
 \end{lemma}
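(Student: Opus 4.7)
The plan is to deduce this essentially immediately from the bracket formula displayed just above the lemma together with the Frobenius theorem. By Frobenius, the horizontal distribution is integrable if and only if it is involutive, which (since the distribution is a module over $C^\infty(T\mathbb{R}^n)$) is equivalent to the brackets of the generating fields $\delta/\delta x_i$ remaining in the horizontal distribution. So the whole question reduces to analyzing $[\delta/\delta x_i,\delta/\delta x_j]$ for $i,j=1,\ldots,n$.

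First I would invoke the computation recalled just before the statement, namely
\[
\Bigl[\frac{\delta}{\delta x_i},\frac{\delta}{\delta x_j}\Bigr]=\sum_{a=1}^n R^a_{ij}\,\frac{\partial}{\partial y_a}.
\]
The right-hand side lies entirely in the vertical distribution $VT\mathbb{R}^n$, since $\{\partial/\partial y_a\}$ is a basis for $VT\mathbb{R}^n$. Next I would use the direct sum decomposition $TT\mathbb{R}^n=H_uT\mathbb{R}^n\oplus V_uT\mathbb{R}^n$ from \eqref{decomposition}: the bracket belongs to the horizontal distribution if and only if its vertical component vanishes, but it is purely vertical, so it belongs to the horizontal distribution if and only if it is zero.

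Finally I would conclude that involutivity of the horizontal distribution is equivalent to $\sum_a R^a_{ij}\,\partial/\partial y_a=0$ for every $i,j$, which—by linear independence of $\partial/\partial y_a$—is equivalent to $R^a_{ij}=0$ for all $i,j,a$, i.e.\ to the vanishing of the curvature tensor $R=\tfrac{1}{2}\sum_{i,j,a}R^a_{ij}\,dx_j\wedge dx_i\otimes\partial/\partial y_a$. There is no real obstacle here; the only thing one must take care of is pointing out that the bracket formula lies transversally to the horizontal distribution, so no horizontal component can ever cancel the vertical one, making the equivalence genuinely if-and-only-if rather than just one implication.
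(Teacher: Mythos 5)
Your proposal is correct and is exactly the argument the paper intends: the lemma is stated as an immediate consequence of the displayed bracket formula $[\frac{\delta}{\delta x_i},\frac{\delta}{\delta x_j}]=\sum_a R^a_{ij}\frac{\partial}{\partial y_a}$, combined with the decomposition \eqref{decomposition} and Frobenius. Your extra remark that the bracket is purely vertical, hence horizontal only if zero, is the one point worth making explicit, and you made it.
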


\section{Semi-sprays and Dirac structure}\label{semi-sprays-and-dirac}

In this section, we present our main result. Let $S$ be a semi-spray given by \eqref{spray}
 and consider the basis $\{(\frac{\delta}{\delta x_i})_u,(\frac{\partial}{\partial y_a})_u\}_{i,a=1,..,n}$ where 
 $$(\frac{\delta}{\delta x_i})_u=(\frac{\partial}{\partial x_i})_u-\sum_{a=1}^n \frac{\partial G^a}{\partial y_\alpha}(\frac{\partial}{\partial y_a})_u,$$
 as defined in Section~\ref{sode-connections}. 

We start with a spray $S$. By Lemma~\ref{spray-is-horizontal}  spray $S$ is horizontal i.e. it belongs to the characteristic distribution of the almost Dirac structure 
\begin{equation}
 L:=HT\mathbb{R}^n\oplus (HT\mathbb{R}^n)^\circ,
 \end{equation}
see Example \ref{example-distribution}. We are going to apply the gauge transformations mentioned in Example~\ref{the-one-we-use} to this almost Dirac structure in order to get Constants of motion. 

\begin{theorem}\label{first-result}
Given a spray $S$ on $\mathbb{R}^n$, if there exist a two form $\omega$ and a closed one-form $\alpha$ such that 
\begin{equation}\label{main-requirement}
(\alpha-i_S\omega)\in (HT\mathbb{R}^n)^\circ,
\end{equation}
then the function $H\in C^\infty(\mathbb{R}^n)$ such that $d H=\alpha$ is a constant of motion for $S$. Furthermore, if $S$ is $R$-flat, i.e. the horizontal distribution $(HT\mathbb{R}^n)^\circ$ is integrable, and $\omega$ is closed then $S$ is Hamiltonian with respect to Dirac structure defined by 
\[\{(X,\alpha)\,|\, X\in (HT\mathbb{R}^n),\,\,(\alpha-i_S\omega)\in (HT\mathbb{R}^n)^\circ)\}.\]
\end{theorem}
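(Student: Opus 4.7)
The plan is to exploit two facts in sequence: the horizontality of a spray guaranteed by Lemma~\ref{spray-is-horizontal}, and the skew-symmetry of the auxiliary two-form $\omega$. First I would argue that $\alpha(S)=0$, which immediately shows $H$ is a constant of motion; then I would check that $(S,dH)$ sits in the gauge-transformed Dirac structure under the additional hypotheses of the second part.

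For the first claim, I would start by invoking Lemma~\ref{spray-is-horizontal} to place $S$ inside the horizontal distribution $HT\mathbb{R}^n$. Hypothesis~\eqref{main-requirement} says that $\alpha-i_S\omega$ lies in the annihilator $(HT\mathbb{R}^n)^\circ$, so pairing this one-form with $S$ must yield zero:
\[
0=(\alpha-i_S\omega)(S)=\alpha(S)-\omega(S,S)=\alpha(S),
\]
where the last equality uses skew-symmetry of the two-form $\omega$. Since $dH=\alpha$, this gives $S(H)=dH(S)=0$, so $H$ is preserved along the integral curves of $S$.

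For the Hamiltonian assertion, the key is that the added hypotheses place us exactly in the setting of Example~\ref{the-one-we-use}. The $R$-flatness of $S$ together with Lemma~\ref{horizontal-integrability} shows that $HT\mathbb{R}^n$ is involutive, hence integrates to a foliation, so $L=HT\mathbb{R}^n\oplus(HT\mathbb{R}^n)^\circ$ is a genuine Dirac structure. Since $\omega$ is closed, the gauge transformation recalled in Example~\ref{the-one-we-use} produces the Dirac structure
\[
L_\omega=\{(X,\beta)\,|\,X\in HT\mathbb{R}^n,\,(\beta-i_X\omega)\in(HT\mathbb{R}^n)^\circ\}.
\]
To conclude I would verify directly that $(S,dH)\in L_\omega$: membership in the first slot is Lemma~\ref{spray-is-horizontal} again, and the second condition $(dH-i_S\omega)=(\alpha-i_S\omega)\in(HT\mathbb{R}^n)^\circ$ is precisely the standing hypothesis~\eqref{main-requirement}.

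There is no substantive obstacle: once the right definitions have been assembled, the argument is a one-line unwinding in each part. The only subtlety worth flagging is the role of horizontality of the spray, which is the single ingredient that converts the annihilator condition on $\alpha-i_S\omega$ into the vanishing of $\alpha(S)$ via skew-symmetry of $\omega$; without $S\in HT\mathbb{R}^n$, the first step would fail and the whole approach would collapse, which is why the result is stated for sprays rather than arbitrary semi-sprays (the latter case is presumably where the ``modifications'' mentioned in the abstract enter).
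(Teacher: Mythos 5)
Your proposal is correct and follows exactly the route the paper intends: the paper's own proof is the single line ``a conclusion of Examples~\ref{example-distribution} and \ref{the-one-we-use},'' and you have simply supplied the details it leaves implicit, namely Lemma~\ref{spray-is-horizontal} placing $S$ in $HT\mathbb{R}^n$ and the skew-symmetry identity $\omega(S,S)=0$ turning the annihilator condition into $\alpha(S)=0$. Note also that you have (correctly) read the displayed subbundle with $i_X\omega$ rather than the paper's literal $i_S\omega$, which is evidently a typo since the latter would not even define a linear subbundle.
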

\begin{proof}
The proof is simply a conclusion of Examples~\ref{example-distribution} and \ref{the-one-we-use}. 
\end{proof}

Rewriting Theorem~\ref{first-result} in local coordinates $(x,y)$,
 \begin{corollary}\label{first-cor}
  Given spray $S=\sum_{\alpha=1}^n y_\alpha\frac{\partial}{\partial x_\alpha}- 2\sum_{a=1}^n G^a(x,y)\frac{\partial}{\partial y_a}$, if there exist an anti-symmetric matrix valued function $A(u)$ and functions $f_1,...,f_n$ such that the one form 
  \[\alpha=A.\begin{pmatrix}
  y_1\\ \vdots\\y_n\\-2 G^1(x,y)\\\vdots\\ -2G^n(x,y)
  \end{pmatrix}+\begin{pmatrix}
  \sum_{i=1}^n f_i\frac{\partial G^i}{\partial y_1}\\\vdots\\ \sum_{i=1}^n f_i\frac{\partial G^i}{\partial y_n}\\f_1\\\vdots\\f_n
  \end{pmatrix},\]
  is closed  then The function $H$ such that $d H=\alpha$ is a constant of motion for $S$. Furthermore, if $S$ is $R$-flat and the $2$-form represented by $A$ is closed (for example $A$ is constant) then $S$ has a Hamiltonian description.
 \end{corollary}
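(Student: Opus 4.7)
The plan is to view Corollary~\ref{first-cor} as the explicit local-coordinate translation of Theorem~\ref{first-result}; once the two summands of the displayed $\alpha$ are identified respectively with $i_S\omega$ and with a section of $(HT\mathbb{R}^n)^\circ$, both assertions will follow directly from Theorem~\ref{first-result}. So the entire task reduces to unpacking the ansatz in the adapted coframes.

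First, I would interpret the antisymmetric matrix $A(u)$ as the matrix of coefficients of a $2$-form $\omega$ written in the standard coframe $\{dx_i,dy_a\}$ of $TT\mathbb{R}^n$. Given $S=\sum_\alpha y_\alpha \partial/\partial x_\alpha-2\sum_a G^a\partial/\partial y_a$ as in \eqref{spray}, the column of components of $S$ in the basis $\{\partial/\partial x_i,\partial/\partial y_a\}$ is exactly $(y_1,\ldots,y_n,-2G^1,\ldots,-2G^n)^{T}$, so contracting $\omega$ with $S$ reduces, up to the standard sign convention, to the matrix product of $A$ with this column vector. This recovers the first summand of the displayed $\alpha$.

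Second, I would use the fact that the dual of the Berwald basis is $\{dx_i,\delta y_a\}$ with $\delta y_a=dy_a+\sum_i \frac{\partial G^a}{\partial y_i}dx_i$, so $(HT\mathbb{R}^n)^\circ$ is spanned over $C^\infty(T\mathbb{R}^n)$ by $\delta y_1,\ldots,\delta y_n$. Every section $\beta\in(HT\mathbb{R}^n)^\circ$ therefore has the form $\beta=\sum_a f_a\,\delta y_a$, and expanding this in the coframe $\{dx_i,dy_a\}$ gives
\[
\beta=\sum_j\Bigl(\sum_i f_i\,\tfrac{\partial G^i}{\partial y_j}\Bigr)dx_j+\sum_a f_a\,dy_a,
\]
which is precisely the second summand in the corollary. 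Thus the proposed $\alpha$ is exactly $i_S\omega+\beta$ with $\beta\in(HT\mathbb{R}^n)^\circ$, i.e.\ condition \eqref{main-requirement} of Theorem~\ref{first-result} holds.

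With this identification in hand, the first assertion of the corollary is immediate from the first part of Theorem~\ref{first-result}: if $\alpha$ is closed, then $H$ with $dH=\alpha$ is a constant of motion of $S$. For the Hamiltonian part, $R$-flatness of $S$ gives integrability of $HT\mathbb{R}^n$ by Lemma~\ref{horizontal-integrability}, and closedness of $\omega$ is automatic when $A$ has constant entries, since $\omega$ is then a constant-coefficient combination of $dx_i\wedge dx_j$, $dx_i\wedge dy_a$, $dy_a\wedge dy_b$. The second assertion of Theorem~\ref{first-result} then yields the Hamiltonian description. There is no real obstacle in this proof; the only point requiring care is keeping track of the index/sign convention when translating between the two coframes $\{dx_i,dy_a\}$ and $\{dx_i,\delta y_a\}$, which is a purely book-keeping matter.
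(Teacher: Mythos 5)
Your proposal is correct and follows exactly the route the paper intends: the corollary is stated as the local-coordinate rewriting of Theorem~\ref{first-result}, and you correctly identify the first summand as $i_S\omega$ for the $2$-form with matrix $A$ in the coframe $\{dx_i,dy_a\}$ and the second summand as the general section $\sum_a f_a\,\delta y_a$ of $(HT\mathbb{R}^n)^\circ$. The paper gives no explicit proof of the corollary, so your write-up simply supplies the bookkeeping it leaves implicit.
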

 
When $S$ is not a spray we will need to choose  a distribution as well. 
\begin{theorem}\label{second-result}
Given a semi-spray $S$ on $\mathbb{R}^n$, let $D$ be a distribution that contains $S$, then if there exist a two form $\omega$ and a closed one-form $\alpha$ such that 
\begin{equation}
(\alpha-i_S\omega)\in D^\circ,
\end{equation}
then the function $H\in C^\infty(\mathbb{R}^n)$ such that $d H=\alpha$ is a constant of motion for $S$. Furthermore, if the distribution $D$ is integrable, and $\omega$ is closed then $S$ is Hamiltonian with respect to Dirac structure defined by 
\[\{(X,\alpha)\,|\, X\in D,\,\,(\alpha-i_S\omega)\in D^\circ)\}.\]
\end{theorem}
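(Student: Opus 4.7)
The plan is to mirror the proof of Theorem~\ref{first-result}, with the generic distribution $D$ now playing the role that the horizontal distribution $HT\mathbb{R}^n$ played before. The first step is to set up the candidate subbundle
\[
L_{D,\omega} := \{(X,\alpha)\,|\, X\in D,\,\,(\alpha-i_X\omega)\in D^\circ\},
\]
which is an almost Dirac structure by Examples~\ref{example-distribution} and \ref{the-one-we-use} (it is the gauge transform by $\omega$ of the almost Dirac structure $D\oplus D^\circ$). The hypothesis $S\in D$ together with $(\alpha-i_S\omega)\in D^\circ$ is precisely what is required for the pair $(S,\alpha)$ to lie in $L_{D,\omega}$.

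Next I would extract the constant-of-motion conclusion from this membership. Evaluating the one-form $(\alpha-i_S\omega)\in D^\circ$ on the vector $S\in D$ forces
\[
\alpha(S)-\omega(S,S)=0,
\]
and the antisymmetry of $\omega$ gives $\omega(S,S)=0$, so $\alpha(S)=0$. Since $dH=\alpha$, this reads $S(H)=dH(S)=0$, i.e.\ $H$ is constant along the integral curves of $S$.

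For the second assertion, assume in addition that $D$ is integrable and $\omega$ is closed. Then Example~\ref{the-one-we-use} upgrades $L_{D,\omega}$ from an almost Dirac to a genuine Dirac structure, because the gauge transformation by a closed two-form preserves the Dirac property and $D\oplus D^\circ$ is Dirac whenever $D$ is integrable. Since $(S,dH)=(S,\alpha)\in L_{D,\omega}$ was already shown in the first step, $S$ is a Hamiltonian vector field for $L_{D,\omega}$ with Hamiltonian $H$ by the definition recalled in Subsection~\ref{sec:dirac}.

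There is essentially no hard step: the statement is a direct packaging of Example~\ref{the-one-we-use} with the antisymmetry identity $\omega(S,S)=0$. The only point that deserves attention is why the hypothesis $S\in D$ is imposed explicitly now, whereas in Theorem~\ref{first-result} it was obtained for free: for a full spray, Lemma~\ref{spray-is-horizontal} guarantees horizontality, but a general semi-spray need not lie in its associated horizontal distribution, so one must either enlarge the horizontal distribution or postulate membership, which is exactly what the hypothesis on $D$ does.
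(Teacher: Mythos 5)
Your proof is correct and follows the same route as the paper, whose own proof is just the one-line reference to Examples~\ref{example-distribution} and \ref{the-one-we-use}; you simply fill in the details that the paper leaves implicit, in particular the key computation $\alpha(S)=\omega(S,S)=0$ that yields $S(H)=0$, and the observation that integrability of $D$ plus closedness of $\omega$ upgrades the gauge-transformed almost Dirac structure to a genuine one. Your closing remark correctly identifies why $S\in D$ must now be a hypothesis rather than a consequence of Lemma~\ref{spray-is-horizontal}, and you also (rightly) read the $i_S\omega$ in the displayed subbundle as $i_X\omega$, which is what the gauge-transformation construction requires.
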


\begin{proof}
The proof is the same as Theorem~\ref{first-result}.
\end{proof}

In choosing the distribution $D$ one may use a slight perturbation of the horizontal distribution. 

\section{Existence of non-standard Hamiltonians}\label{examples}

In this section we provide examples, a spray and one semi-spray. 

\begin{example} 
Let $Q=\mathbb{R}^2-\{0\}$ with coordinates ${x_1},{x_2}$. 
Given 
$$ S=y_1 \frac{\partial }{\partial {x_1}}+y_2 \frac{\partial }{\partial {x_2}}-2y_2^2\frac{\partial }{\partial y_2}.$$
It is clear that $S$ is a spray. Since $G^1=0, G^2=y_2^2$, thus 
$\frac{\delta}{\delta {x_1}}=\frac{\partial}{\partial x_1},   
 \frac{\delta}{\delta {x_2}}=\frac{\partial}{\partial x_2}-2y_2 \frac{\partial}{\partial y_2}$, also $\delta y_1=dy_1, \delta y_2=dy_2+2y_2dx_2.$

 In this case $HT\mathbb{R}^2=<S,  \frac{\delta }{\delta {x_1}}>$ and $(HT\mathbb{R}^2)^\circ=<\delta y_1, \delta y_2>$. Note that, for $i=1,2, \delta y_i(S)=0.$   

 Let $\omega=\Upsilon_{12}dx_1\wedge dx_2+\sum_{s=1}^2\sum_{t=1}^2\Psi_{st} dx_s\wedge \delta y_t+\Omega_{12} \delta y_1\wedge d\delta y_2.$ According to the theorem, $H$ is a constant of motion for $S$ if $dH -i_s\omega  \in (HT\mathbb{R}^n)^\circ.$ In local coordinates, this means that 
for $\mu $ and $\nu$ 
\begin{subequations}\label{ex1-1}
\begin{align}
\sum_{i=2}^3(\frac{\delta H}{\delta {x_i}}dx_i &+ \frac{\pa H}{\pa {y_i}}\delta y_i)   -  \{  y_1 \Upsilon_{12}dx_2 - y_2 \Upsilon_{12}dx_1 + \sum_{s=1}^2\sum_{t=1}^2 \Psi_{st}(y_s \delta y_t)\} 
\nonumber \\
=&
 \mu dy_1+ \nu (dy_2+2y_2dx_2). 
\end{align}
\end{subequations}
Comparing coefficients of the separate basis 1-forms on the two sides of this equation, we see that 

\begin{subequations} \label{ex1}
\begin{align}
\frac{\partial H }{\partial {x_1}}+y_2 \Upsilon_{12}=0,  
\\
\frac{\delta H}{\delta {x_2}}-y_1\Upsilon_{12}= 0,   
 \\
\frac{\partial H}{\partial {y_1}}-(\Psi_{11}y_1+\Psi_{21}y_2)=\mu,   
 \\
\frac{\partial H}{\partial {y_2}}-(\Psi_{12}y_1+\Psi_{22}y_2)=\nu.    
\end{align}
\end{subequations}
Let 
\[v_1:=y_1, v_2:=-\frac{(2y_2 x_1-y_1)}{y_2y_1}
,v_3:= x_2-{1\over 2}ln(\frac{y_1}{y_2}) \] and $H_1(v_1,v_2, v_3)$ be an arbitrary differentiable function of these variables. By using Maple program, we can get a solution of \eqref{ex1} as follows:
 \begin{subequations} \label{solexa1}
\begin{align}
& H(x,y,z,v) = H_1(v_1,v_2, v_3),\\
& \Upsilon_{12} = \frac{2}{y_1y_2}\frac{\partial H_1}{\partial v_2} \\
& \mu =-\frac{1}{2y_1^2}\left( -2\frac{\partial H_1}{\partial v_1}y_1^2-4\frac{\partial H_1}{\partial v_2}x_1+\frac{\partial H_1}{\partial v_3}y_1+2y_1^3\psi_{11}+2y_2 \psi_{21} y_1^2\right) \\
&\nu =\frac{1}{2y_2^2}\left( -2\frac{\partial H_1}{\partial v_2}+\frac{\partial H_1}{\partial v_3}y_2-2y_2^2y_1\psi_{12}-2y_2^3 \psi_{22}\right),
\end{align}
\end{subequations}
where $\psi_{ij}= \psi_{ij}(x_1,x_2,y_1,y_2), i,j=1,2$ are arbitrary functions. 
 %

Here we consider a special case of
\eqref{solexa1} in which we take $\Upsilon_{12}=0, \Omega_{12}=0$ and $\Psi_{ij}=0, i,j=1,2$ except for $\Psi_{11}$. In this case, the presymplectic form takes the form

\[  \omega=\Psi_{11} dx_1\wedge \delta y_1\]
and  the almost Dirac structure associated to this system is 
\[ span \{  (S, dH), (\frac{\delta}{\delta {x_1}}, \Psi_{11}\delta y_1),  (\frac{\pa}{\pa {y_1}}, -\Psi_{11}dx_1), (\frac{\pa}{\pa {y_2}}, 0)   \}.\]
\end{example}

%
%
\begin{example} In this example we consider the first  class of equations of motion  of the form 
\begin{equation}\label{special eq-1}
\ddot{x}_1 = - \frac{f' (x_1)y_1^2}{f(x_1)}, \quad
\ddot{x}_2 = -2f(x_1).
 \end{equation}
where $f(x_1)$ is an arbitrary  differentiable function  of $x_1$. 
In this case we assume that $G^1=\frac{y_1^2  f'(x_1)}{2f(x_1)}, G^2=f(x_1), \quad$  
 $ D=<S,  \frac{\delta }{\delta {x_1}}>$ and $D^\circ=<\delta y_1, \delta y_2+2\frac{f}{y_2}dx_2>$. Note that 
\[ \{\Delta x_1:=dx_1-\frac{y_1}{y_2}dx_2, \de S:=\frac{1}{y_2}dx_2, \de y_1:= \delta y_1,  \de y_2:=\delta y_2+2\frac{f}{y_2}dx_2\} \]
 is the annihilator of $\{   S,  \frac{\delta }{\delta {x_1}}, \frac{\pa }{\pa {y_1}}, \frac{\pa }{\pa {y_2}} \}$.
%
 Let 
\begin{eqnarray*} \omega =&\Upsilon_{12}\de S \wedge \de x_1  +\Psi_{11} \de S \wedge \de y_1+\Psi_{12} \de S \wedge \de y_2  \nonumber \\
&+\Psi_{21} \de x_1 \wedge \de y_1+\Psi_{22} \de x_1 \wedge \de y_2+\Omega_{12}\de y_1  \wedge \de y_2  \nonumber
\end{eqnarray*}
 be a two form on $\mathbb{R}^2.$
 According to the theorem, $H$ is a constant of motion for $S$ if $dH -i_s\omega  \in D^\circ.$ In local coordinates, this means that 
for $\mu $ and $\nu$ 
\begin{eqnarray*}
S( H) \de S &+&\frac{\delta H}{\delta {x_1}}\de x_1 + \frac{\pa H}{\pa {y_1}}\de y_1 + \frac{\pa H}{\pa {y_2}}\de y_2 \\
  & -&  \{  \Upsilon_{12} \de x_1  +\Psi_{11}  \de y_1+\Psi_{12}  \de y_2 \}
%
=  \mu \de y_1+\nu \de y_2. \nonumber
\end{eqnarray*} 
The coefficients of $\de x_1$ and $\de S$  in the above equation are zero,  so
\begin{subequations}\label{ex2-2}
\begin{align}
  &S( H) =0, \label{ex2-21}\\
 &\frac{\delta H}{\delta {x_1}} -\Upsilon_{12}=0 \label{ex2-22}\\
 &\frac{\pa H}{\pa {y_1}}-\Psi_{11} =\mu,\label{ex2-23} \\
  & \frac{\pa H}{\pa {y_2}}-\Psi_{12} =\nu. \label{ex2-24} 
\end{align}
\end{subequations}
Let $v=4G^2 x_2+y_2^2$ and $H_1(v)$ be an arbitrary differentiable function of this variable. Then a solution  of  \eqref{ex2-2} (using Maple program) is:
\[ 
H(x_1,x_2,y_1,y_2) = H_1(v), \mu = -\psi_{11}, \nu= 2H_1'y_2-\psi_{12}, \Upsilon_{12}=0.\]

In addition to the above answer, there are other answers to Equation \eqref{ex2-2}.
For example, let  $\mu=0, \nu=0.$ The choice of  $ \Psi_{11}=2f$ and $\Psi_{12}=0, \Upsilon_{12}=0$ yield 
 $ H_{y_1}=2f,  H_{y_2}=0,$ thus $H=2fy_1+g(x_1,x_2)$.   If the rest of the coefficients of $\omega$  are equal to zero, except for 
  $\Omega_{12}=1$, then from \eqref{ex2-22} we have $\frac{\pa g}{\pa {x_1}}=0$. Finally from  \eqref{ex2-21}  we can conclude that $\frac{\pa g}{\pa {x_2}}=0$. These calculations show that  $g$ is a constant function and   $H=2f(x_1) y_1 +constant$  is a      constant of motion for $S$. Under our assumption, the local form of our 2-form is 
\[  \omega =2f \de S \wedge \de y_1 + \de y_1\wedge \de y_2\]
and  the almost Dirac structure associated to this system is 
\[ span \{  (S, dH), (\frac{\delta}{\delta {x_1}}, 0),  (\frac{\pa}{\pa {y_1}}, dy_2), (-\frac{\pa}{\pa {y_2}}, \de y_1)   \}.\]

\end{example}

The second special class of the equations of motions which admit non-standard Hamiltonian is the following constrained  problem.
\begin{example} 
Consider the  constrained mechanical system $(\mathbb{R}^3, L, C)$ described by the Lagrangian function $L=\frac{1}{2}({y_1}^2+{y_2}^2+{y_3}^2)-mgx_3$ and the quadratic constraint
 $$C: \Phi=a^2({y_1}^2+{y_2}^2)-{y_3}^2.$$
 Let $Q$ be the constraint submanifold of  $T\mathbb{R}^3$ defined by 
\[ Q = \{ ({x_1},{x_2},{x_3},{y_1},{y_2},{y_3})\in  T\mathbb{R}^3| \Phi ({x_1},{x_2},{x_3},{y_1},{y_2},{y_3})=0 \hspace{0.2cm} {\text and} \hspace{0.2cm} {y_3}\neq 0 \}.\]
The equations of motion of this  constrained problem (see \cite{MR1445410} )
 are 
\begin{equation}\label{special eq-2}
\ddot{x}_1 = -\frac{ga^2}{1+a^2}\frac{ y_1}{ y_3}, \quad \quad   \\
\ddot{x}_2= -\frac{ga^2}{1+a^2}\frac{ y_2}{ y_3},\quad \quad  \\
\ddot{x}_3= -\frac{ga^2}{1+a^2}.
\end{equation}
The semi-spray associated to these equations is 
\begin{equation}\label{cms-1}
  S=	 y_1\frac{\partial}{\partial x_1}+y_2\frac{\partial}{\partial x_2}+y_3\frac{\partial}{\partial x_3}- 2A\frac{y_1}{y_3} \frac{\partial}{\partial y_1}- 2A\frac{y_2}{y_3}\frac{\partial}{\partial y_2}- 2A\frac{\partial}{\partial y_3}, 
\end{equation}
where $A=\frac{ga^2}{2(1+a^2)}$. For this semi-spray, $G^1=A\frac{y_1}{y_3} , G^2=A\frac{y_2}{y_3} , $ and $G^3=A .$ Thus
$\frac{\delta}{\delta {x_1}}=\frac{\partial}{\partial x_1}- \frac{A}{y_3} \frac{\partial}{\partial y_1},  
\frac{\delta}{\delta {x_2}}=\frac{\partial}{\partial x_2}- \frac{A}{y_3} \frac{\partial}{\partial y_2}$ and 
$\frac{\delta}{\delta {x_3}}=\frac{\partial}{\partial x_3}+\frac{Ay_1}{y_3^2} \frac{\partial}{\partial y_1}+ 
\frac{Ay_2}{y_3^2} \frac{\partial}{\partial y_2}$.

 Let $D=\{ \frac{\delta }{\delta {x_1}}, \frac{\delta }{\delta {x_2}}, S\}.$ We choose $\{\delta y_1+\frac{2Ay_1}{y_3^2}dx_3, \quad \delta y_2-\frac{y_2}{y_3}\delta y_3\}$ as a basis for $D^\circ$. 

 Let $\omega=\sum_i\sum_j\Upsilon_{ij}dx_i\wedge dx_j+\sum_{s=1}^3\sum_{t=1}^3\Psi_{st} dx_s\wedge \delta y_t+\sum_i\sum_j\Omega_{ij} \delta y_i\wedge d\delta y_j, \quad for ~ i<j ~ and ~   i,j=1,...,3.$ According to the theorem, $H$ is a constant of motion for $S$ if $dH -i_s\omega  \in D^\circ.$ In local coordinates, this means that 
for $\mu $ and $\nu$ 
\begin{eqnarray}\label{le-1}
\sum_{i=1}^3 &&(\frac{\delta H}{\delta {x_i}}dx_i        + \frac{\pa H}{\pa {y_i}}\delta y_i)   -  \{  y_1 \Upsilon_{12}dx_2 - y_2 \Upsilon_{12}dx_1  + y_1 \Upsilon_{13}dx_3 - y_3 \Upsilon_{13}dx_1 \nonumber \\
&& + y_2 \Upsilon_{23}dx_3 - y_3 \Upsilon_{23}dx_2 + \sum_{s=1}^3\sum_{t=1}^3 \Psi_{st}(y_s \delta y_t +2G^tdx_s) 
-2G^1\Omega_{12}\delta y_2\nonumber \\
&&+2G^2\Omega_{12}\delta y_1  -2G^1\Omega_{13}\delta y_3+2G^3\Omega_{13}\delta y_1   -2G^2\Omega_{23}\delta y_3+2G^3\Omega_{23}\delta y_2    \} \nonumber \\
=&&
 \mu ( \delta y_1+\frac{2Ay_1}{y_3^2}dx_3)+ \nu (\delta y_2-\frac{y_2}{y_3}\delta y_3). 
\end{eqnarray} 
Now, equating coefficients of the separate basis 1-forms  of both sides of \eqref{le-1} yields 6 equations:
\begin{eqnarray*}
&&\frac{\partial H }{\partial {x_1}}-  \frac{A }{y_3} \frac{\partial H }{\partial y_1}+y_2 \Upsilon_{12} +y_3 \Upsilon_{13}-2 \sum_{t=1}^3G^t\Psi_{1t}=0, \nonumber \\
&&\frac{\partial H }{\partial {x_2}}-  \frac{A }{y_3} \frac{\partial H }{\partial y_2} -y_1 \Upsilon_{12} +y_3 \Upsilon_{23} -2\sum_{t=1}^3G^t\Psi_{2t}=0, \nonumber \\
&&\frac{\partial H }{\partial {x_3}}+  \frac{Ay_1 }{y_3^2} \frac{\partial H }{\partial y_1}+ \frac{Ay_2 }{y_3^2} \frac{\partial H }{\partial y_2} -y_1 \Upsilon_{13} -y_2 \Upsilon_{23} -2\sum_{t=1}^3G^t\Psi_{3t}=\frac{2Ay_1 \mu  }{y_3^2} \nonumber \\
&&\frac{\partial H }{\partial {y_1}}-\sum_{t=1}^3y_t\Psi_{t1} -2G^2\Omega_{12}-2G^3\Omega_{13}=\mu \nonumber \\
&&\frac{\partial H }{\partial {y_2}}-\sum_{t=1}^3y_t\Psi_{t2} +2G^1\Omega_{12}-2G^3\Omega_{23}=\nu \nonumber\\
&&\frac{\partial H }{\partial {y_3}}-\sum_{t=1}^3y_t\Psi_{t3}+2G^1\Omega_{13}+2G^2\Omega_{23}=\frac{-\nu y_2}{y_3}.\nonumber
\end{eqnarray*}
 By  using Maple program, we can get a solution of \eqref{le-1} as follows:
$H({x_1},{x_2},{x_3},{y_1},{y_2},{y_3})$ can be an arbitrary differentiable function and in this case 
\begin{eqnarray*}
&&\nu =(2\frac{\partial H }{\partial {y_2}}G^2y_3-y_1y_3\frac{\partial H }{\partial {x_1}}+2G^3\frac{\partial H }{\partial {y_3}}y_3+2\mu Ay_1+2\frac{\partial H }{\partial {y_1}}G^1y_3 \nonumber \\
     &&\quad -2\mu G^1y_3  -\frac{\partial H }{\partial {x_3}}y_3^2-y_2 \frac{\partial H }{\partial {x_2}}y_3 ) 
                   /  {(-2y_2G^3+2y_3G^2)},  \nonumber \\
&&\psi_{13}= {1/2}  (\frac{\partial H }{\partial {x_1}}y_3-2G^1\psi_{11}y_3+y_3^2\Upsilon_{13}
        +y_2\Upsilon_{12}y_3  \nonumber\\
         &&\quad -2G^2 \psi_{12}y_3-A\frac{\partial H }{\partial {y_1}})  /  {(G^3y_3)},\nonumber  \\
&&\psi_{23}={1/2}\frac{(\frac{\partial H }{\partial {x_2}}y_3-2G^1\psi_{21}y_3-A\frac{\partial H }{\partial {y_2}}-                         
        y_1\Upsilon_{12}y_3+y_3^2\Upsilon_{23}-2G^2\psi_{22}y_3)}{G^3y_3}, \nonumber \\
&&\psi_{31}=\frac{\frac{\partial H }{\partial {y_1}}-y_1\psi_{11}-y_2\psi_{21}-2G^2\Omega_{12}-G^3\Omega_{13}-\mu }{y_3}, \nonumber  \\
&&\psi_{32}={1/2} (y_1\frac{\partial H }{\partial {x_1}}y_3+y_2\frac{\partial H }{\partial {x_2}}y_3+\frac{\partial H }{\partial {x_3}}y_3^2     -2\frac{\partial H }{\partial {y_1}}G^1y_3-2\frac{\partial H }{\partial {y_2}}G^3y_2 
     \nonumber  \\
     && -2\frac{\partial H }{\partial {y_3}}G^3y_3 +  4G^1 ( y_3G^2-y_2G^3)\Omega_{12}+(-4G^3G^2y_3+4(G^3)^2y_2)\Omega_{23}\nonumber   \\
      && -2y_1(y_3G^2-y_2G^3)\psi_{12}  +(-2y_2G^2y_3+2y_2^2G^3)\psi_{22}  \nonumber  \\ 
       &&  -2\mu (-y_3G^1+Ay_1)   )   /   {(y_3(y_3G^2-y_2G^3 ))}, \nonumber 
\end{eqnarray*}
\begin{eqnarray*}
\psi_{33}&=&{1/2}( ((2G^1y_2G^3+Ay_1G^2)y_3-Ay_1y_2G^3)\frac{\partial H }{\partial {y_1}} \nonumber \\ 
  && \quad +y_2(G^2(A+2G^3)y_3-AG^3y_2)\frac{\partial H }{\partial {y_2}}  -y_1\frac{\partial H }{\partial {x_1}}y_3^2G^2-y_2\frac{\partial H }{\partial {x_2}}G^2y_3^2  \nonumber  \\ 
  &&\quad -y_2G^3\frac{\partial H }{\partial {x_3}}y_3^2    +2G^3\frac{\partial H }{\partial {y_3}}G^2y_3^2 -y_1y_3^2(y_3G^2-y_2G^3)\Upsilon_{13}  \nonumber \\ 
 &&\quad+(G^3y_2^2y_3^2-y_2y_3^3G^2)\Upsilon_{23}  +4G^1G^3y_3(y_3G^2-y_2G^3)\Omega_{13}\nonumber \\
  &&\quad +(4G^3(G^2)^2y_3^2    -4(G^3)^2G^2y_3y_2)\Omega_{23}+2G^1y_1y_3(y_3G^2-y_2G^3)\psi_{11} \nonumber  \\      
  &&\quad +2G^2y_1y_3(y_3G^2-y_2G^3)\psi_{12} 
 +2(G^1y_3(y_3G^2-y_2G^3)\psi_{21}  \nonumber \\
   && \quad+G^2y_3(y_3G^2-y_2G^3)\psi_{22} \nonumber \\
   && \quad +G^3\mu (-y_3G^1+Ay_1))y_2)  /  {(G^3y_3^2(y_3G^2-y_2G^3))}.\nonumber 
\end{eqnarray*}
where $\psi_{ij}= \psi_{ij}(x_1,x_2,y_1,y_2), i,j=1,2$ are arbitrary functions.

 We 
can  introduce a constant of motion $H$ for $S$ by
appropriate choice of $\omega,$ different from previous answers: 
 if we put $\Upsilon_{ij}=0$ for each $i,j$ and $\Psi_{11}=\Psi_{12}=\Psi_{13}=\Psi_{21}=\Psi_{22}=\Psi_{23}=0$,  then we can  consider  a  special  choice of $\omega$.
Putting
\[\frac{\partial H }{\partial {x_1}}=\frac{\partial H }{\partial {x_2}}=0, \frac{\partial H }{\partial {y_1}}=\frac{\partial H }{\partial {y_2}}=0.\] 
With the above assumptions, we have 
 $\omega=\Psi_{3t} dx_3\wedge \delta y_t+\Omega_{ij} \delta y_i\wedge d\delta y_j, \quad for ~ i<j ~ and ~  s, t, i,j=1,...,3.$ 
Now from \eqref{le-1} we have 
\begin{eqnarray*}
 \frac{2Ay_1 }{y_3^2}\mu=\frac{\delta H}{\delta {x_3}}-2\sum_{t=1}^3\Psi_{3t}G^t= \frac{\partial H }{\partial x_3}-2\sum_{t=1}^3\Psi_{3t}G^t, \nonumber \\
-\Psi_{31}y_3-2G^2\Omega_{12}-2G^3\Omega_{13}=\mu \nonumber \\
-\Psi_{32}y_3+2G^1\Omega_{12}-2G^3\Omega_{23}=\nu \nonumber \\
\frac{\partial H}{\partial {y_3}}-\Psi_{33}y_3+2G^1\Omega_{13}+2G^2\Omega_{23}=-\frac{y_2 }{y_3}\nu.\nonumber
\end{eqnarray*}
Now,  if we put $\Psi{33}=2$ 
  also $\Psi_{32}=\Psi_{31}=0, \Omega_{12}=\Omega_{13}=0$, which leads to 
\begin{equation}\label{cms-2} \omega= 2dx_3\wedge \delta y_3+\Omega_{23} \delta y_2 \wedge \delta y_3, \end{equation}
 for arbitrary constant function  $\Omega_{23}$, then, assuming $\mu=0, \nu=-2G^3 \Omega_{23}$,
we find that under these conditions $\frac{\partial H}{\partial {x_3}}=4A$ and $\frac{\partial H}{\partial {y_3}}=2y_3.$
Thus   $H=y_3^2+4A{x_3}$ is a constant of the motion of $S$. (Compare this function with the similar one obtained in \cite{MR1445410},  Example 4.3). 

To achieve  almost Dirac structures 
 ,  we refer
to the  equation \eqref{cms-2} in which we set
the coefficient  $\Omega_{23}=0$.   So  $\omega= 2dx_3\wedge \delta y_3$ is a closed 2-form. The Dirac structure associated to $D$ and $\omega$  spanned  by the sections
\[ \{ (S, dH), (\frac{\delta }{\delta {x_1}}, 0), (\frac{\delta }{\delta {x_2}}, 0), (0, \delta y_1+\frac{2Ay_1}{y_3^2}dx_3), (0,  \delta y_2-\frac{y_2}{y_3}\delta y_3\})\}.\]

  We rewrite the semi-spray
as
\[
 S=y_1\frac{\delta }{\delta {x_1}}-\frac{Ay_1}{y_3}\frac{\partial }{\partial {y_1}}+y_2 \frac{\delta }{\delta {x_2}}-\frac{Ay_2}{y_3}\frac{\partial }{\partial {y_2}}+y_3\frac{\partial }{\partial {x_3}} -2A \frac{\partial }{\partial {y_3}}.
 \]
Then 
\begin{subequations}
\begin{align}
 [S, \frac{\delta }{\delta {x_1}}] &=- \{ \frac{\delta y_1}{\delta {x_1}}\frac{\delta }{\delta {x_1}}+\frac{\delta (-\frac{Ay_1}{y_3})}{\delta {x_1}}\frac{\partial }{\partial {y_1}}
+\frac{\delta y_2 }{\delta {x_1}}\frac{\delta }{\delta {x_2}}+\frac{\delta (-\frac{Ay_2}{y_3})}{\delta {x_1}}  \frac{\partial }{\partial {y_2}} \nonumber\\
 & \quad +\frac{\delta  y_3 }{\delta {x_1}}\frac{\partial }{\partial {x_3}} +\frac{\delta (-2A) }{\delta {x_1}} \frac{\partial }{\partial {y_3}}  \}.\nonumber
\end{align}
\end{subequations}
This shows that $D$ is not integrable. 
\end{example}
\begin{example}
Let $S$ be a spray  with zero coefficients on $\mathbb{R}^n$, i.e. for all $a, G^a=0.$ 

 Let $\omega=\sum_{a=1}^{n}dx_a\wedge d y_a.$ For $i=1, ..., n-1$ and $a=1, ..., n$,  
\[\{\de x_i=dx_i-\frac{y_i}{y_n}dx_n,  \hspace{0.2cm} \de S=\frac{dx_n}{y_n},\hspace{0.2cm} dy_a\}\]
is the annihilator of 
\[\{S, \hspace{0.2cm} \frac{\pa}{\pa x_i}, \hspace{0.2cm}\frac{\pa}{\pa y_a}\}.\]
 Using the necessary  condition for a constant of motion function, we have 
\[0= dH-i_S\omega=S(H)\de S+ \sum_{i=1}^{n-1} \frac{\pa H}{\pa x_i}\de x_i+ \sum_{a=1}^{n}\frac{\pa H}{\pa y_a}dy_a -  \sum_{a=1}^{n}y_a d y_a.\]
This means that
\begin{subequations}
\begin{align}
S(H) &=0\\
 \frac{\pa H}{\pa x_i}&=0 \\
 \frac{\pa H}{\pa y_a}-y_a &=0.
\end{align}
\end{subequations}
Thus  $H=\frac{1}{2}\sum_{a=1}^n {y}_a^2 $  is  the    constant of motion of $S$ and 
the almost Dirac structure associated to $S$  is  spanned  by the sections
\begin{equation}
L_{\rm diag}:={\rm Span}\{(\frac{\delta}{\delta x_\alpha},\delta y_\alpha),(\frac{\partial}{\partial y_\beta},-d x_\beta)|\alpha=1,..,n\}. \label{diag-almost-dirac}\end{equation}
\end{example} 

\section*{Acknowledgments}

The authors would like to thank Hassan Najafi Alishah for useful discussions.



\end{document}